\newtheorem{theorem}{Theorem}[section]
\newtheorem{lemma}[theorem]{Lemma}
\newtheorem{corollary}[theorem]{Corollary}
\theoremstyle{definition}
\newtheorem{definition}[theorem]{Definition}
\theoremstyle{remark}
\numberwithin{equation}{section}
\begin{document}

\subjclass[2020]{Primary 11M06, 26A33; Secondary 11M26, 11M36}

\title{An incomplete Riemann Zeta function as a fractional integral}

\author{S. M. Crider}
\address{}

\author{S. Hillstrom}
\address{}

\date{September 26, 2024}

\dedicatory{The authors thank Drs. Hong-Ming Yin, Christopher Bishop, and Samuel Johnston.}

\begin{abstract}
    An incomplete Riemann zeta function can be expressed as a lower-bounded, improper Riemann-Liouville fractional integral, which, when evaluated at $0$, is equivalent to the complete Riemann zeta function. Solutions to Landau’s problem with $\zeta(s) = \eta(s)/0$ establish a functional relationship between the Riemann zeta function and the Dirichlet eta function, which can be represented as an integral for the positive complex half-plane, excluding the pole at $s = 1$. This integral can be related to a lower-bounded Riemann-Liouville fractional integral directly via Cauchy’s Formula for repeated integration extended to the complex plane with improper bounds. In order to establish this relationship, however, specific existence conditions must be met. The incomplete Riemann zeta function as a fractional integral has some unique properties that other representations lack: First, it obeys the semigroup property of fractional integrals; second, it allows for an additional functional relationship to itself through differentiation in other regions of convergence for its fractional integral representation. The authors suggest development of the Riemann zeta function using this representation and its properties.
\end{abstract}

\maketitle

\section{Introduction}\label{sec1}

    It is well-known that the Dirichlet $\eta$ function given by
    \begin{equation}
        \eta(s)=\sum_{n=1}^{\infty}\frac{(-1)^{n-1}}{n^{s}}
    \end{equation}
    can be written in relation to the Riemann $\zeta$ function, given by
    \begin{equation}
        \zeta(s)=\sum_{n=1}^{\infty}\frac{1}{n^{s}}
    \end{equation}
    for $\Re(s)>1$ \cite{Titchmarsh}. This relation is given as
    \begin{equation}\label{Eta Equals zeta}
        \eta(s) = (1 - {2}^{1-s})\zeta(s).
    \end{equation}
    Based on work from Landau, it is also now well-known that the relation above can be rewritten in terms of $\zeta$ instead of $\eta$ for $\Re(s) > 0$ except at $s = 1$ \cite{Landau, Sondow, Widder}. The relation holds
    \begin{equation}\label{Zeta Equals eta}
        \zeta(s) = \frac{\eta(s)}{1 - {2}^{1-s}} = \frac{{2}^{s}}{{2}^{s} - {2}}\eta(s).
    \end{equation}
    From this, the Riemann $\zeta$ function is defined using the integral representation of the Dirichlet $\eta$ function for all $\Re(s) > 0$ except at $s = 1$ \cite{Spiegel}:
    \begin{equation}\label{Zeta as eta Integral}
        \zeta(s) = \frac{2^{s}}{(2^{s} - 2) \Gamma(s)} \int_{0}^{\infty} \frac{t^{s - 1}}{e^{t} + 1} \,\mathrm{d}t
    \end{equation}
    where the $\Gamma$ function is defined as
    \begin{equation}
        \Gamma(s)=\int_{0}^{\infty}e^{-t}t^{s-1}\,\mathrm{d}t.
    \end{equation}
    First, we define an incomplete $\eta$ function \cite{Dunster, Kölbig1, Kölbig2}
    \begin{equation}\label{Incomplete eta}
        \eta(s, x) = \frac{1}{\Gamma(s)} \int_{x}^{\infty} \frac{(t - x)^{s - 1}}{e^{t} + 1} \,\mathrm{d}t, \quad x \in [0, \infty)
    \end{equation}
    such that $\eta(s, 0) = \eta(s)$. Using this representation of $\eta$, we can further define an incomplete $\zeta$ function \cite{Dunster, Kölbig1, Kölbig2}
    \begin{equation}\label{Incomplete zeta}
        \zeta(s, x) = \frac{2^{s}}{2^{s} - 2} \eta(s, x) = \frac{2^{s}}{(2^{s} - 2) \Gamma(s)} \int_{x}^{\infty} \frac{(t - x)^{s - 1}}{e^{t} + 1} \,\mathrm{d}t, \quad x \in [0, \infty)
    \end{equation}
    such that $\zeta(s, 0) = \zeta(s)$. Finally, it is important to note that the general form of a lower-bounded Riemann-Liouville fractional integral of order $s$ with $\Re(s) > 0$ is represented by the operator $_{a}\,\mathrm{I}\,_{x}^{s}$. This operator on some locally integrable function $f$ gives the equation
    \begin{equation}\label{Lower RL Integral}
        \left(_{a}\,\mathrm{I}\,_{x}^{s}f\right)(x) = \frac{1}{\Gamma(s)} \int_{a}^{x} (x - t)^{s - 1}f(t) \,\mathrm{d}t,
    \end{equation}
    which follows directly from the extension of Cauchy's Formula for Repeated Integration to the positive complex half-plane \cite{Cauchy, Zhou}. The authors note the similarity between Equations \ref{Incomplete eta} and \ref{Lower RL Integral}. Using these definitions, the authors show it is possible to redefine this incomplete Riemann $\zeta$ function using Riemann-Liouville fractional integrals. This new definition and its implications are discussed in the sections following.

\section{Cauchy's Formula for Repeated Integration with improper limits of integration}\label{sec2}

    Cauchy's Formula for Repeated Integration asserts that $n$ integrals of some continuous function $f$ can be condensed into one integration by the equation
    \begin{equation}\label{Cauchy's Formula}
        f^{(-n)}(x) = \frac{1}{(n-1)!} \int_{a}^{x} (x - t)^{n - 1}f(t) \,\mathrm{d}t.
    \end{equation}
    The lower bound $a$ is generally constant, but it can be unbounded under specific conditions \cite{Apostol, Cauchy}.
    
    \begin{lemma} 
    \label{Cauchy's Formula with Improper Bounds}
    
        Let $f$ be a continuous, real-valued function on $(-\infty, b]$. Then the $n$th repeated integral of $f$ over $(-\infty, x)$ such that $x \in (-\infty, b]$ is given by the single integration
        $$f^{(-n)}(x) = \frac{1}{(n-1)!} \int_{-\infty}^{x} (x - t)^{n - 1}f(t) \,\mathrm{d}t$$
        if the integral converges $\forall x \in (-\infty, b]$.
        
    \end{lemma}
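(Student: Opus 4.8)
The plan is to proceed by induction on $n$, with the classical Cauchy formula on a compact interval as the base of a truncation argument and with Fubini's theorem providing the one nontrivial manipulation, the interchange of a pair of iterated integrals over an unbounded region.

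For $n = 1$ the asserted identity $f^{(-1)}(x) = \int_{-\infty}^{x} f(t)\,\mathrm{d}t$ is just the definition of the first repeated integral with its lower limit pushed to $-\infty$, and it is meaningful exactly when that improper integral converges. For the inductive step, assume the formula for $n$. Writing the $(n+1)$st repeated integral as an integral of the $n$th and inserting the inductive hypothesis gives
$$ f^{(-(n+1))}(x) = \int_{-\infty}^{x} f^{(-n)}(u)\,\mathrm{d}u = \frac{1}{(n-1)!}\int_{-\infty}^{x}\!\int_{-\infty}^{u} (u-t)^{n-1} f(t)\,\mathrm{d}t\,\mathrm{d}u. $$
Swapping the order of integration over $\{(t,u): t \le u \le x\}$ and using $\int_{t}^{x}(u-t)^{n-1}\,\mathrm{d}u = (x-t)^{n}/n$ collapses this to $\frac{1}{n!}\int_{-\infty}^{x}(x-t)^{n}f(t)\,\mathrm{d}t$, which is the claim for $n+1$.

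The main obstacle — and the place where the convergence hypothesis is essential — is legitimizing that interchange on an unbounded domain. I would do this by truncation: fix $x \le b$, apply the ordinary Cauchy formula of Equation~\ref{Cauchy's Formula} on $[-R, x]$ (where all integrands are continuous on a compact set, so the rearrangement is classical), obtaining the $n$-fold integral based at $-R$ as $\frac{1}{(n-1)!}\int_{-R}^{x}(x-t)^{n-1}f(t)\,\mathrm{d}t$, and then let $R \to \infty$. The right side converges by hypothesis; what remains is to see that the iterated antiderivatives based at $-R$ converge to those based at $-\infty$, which follows from dominated or monotone convergence applied successively through the $n$ integrations once one knows each lower-order improper integral $\int_{-\infty}^{x}(x-t)^{k-1}f(t)\,\mathrm{d}t$, $1 \le k \le n$, converges. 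That last point is itself a short comparison: for $x - t \ge 1$ one has $(x-t)^{k-1} \le (x-t)^{n-1}$, so absolute convergence of the order-$n$ integral near $-\infty$ forces it for every lower order, and the remaining piece $t \in [x-1, x]$ is a proper integral of a continuous function.

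A cleaner route, which I would probably adopt in the write-up, is to bypass the limiting argument and verify absolute integrability of the double integral directly: $\int_{-\infty}^{x}\int_{-\infty}^{u}(u-t)^{n-1}|f(t)|\,\mathrm{d}t\,\mathrm{d}u = \frac{1}{n}\int_{-\infty}^{x}(x-t)^{n}|f(t)|\,\mathrm{d}t$, which is finite whenever the single improper integral in the statement converges absolutely, so that Fubini's theorem applies at once. I would add a remark that "the integral converges" in the hypothesis is best read as absolute convergence, since otherwise the iterated integrals defining $f^{(-n)}$ need not all be well defined and the interchange is not automatic.
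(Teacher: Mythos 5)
Your proposal is correct, but it takes a genuinely different route from the paper's. The paper also argues by induction, but its inductive step is differential: it uses the Leibniz Integral Rule to show that $\frac{\mathrm{d}}{\mathrm{d}x}\,\frac{1}{n!}\int_{-\infty}^{x}(x-t)^{n}f(t)\,\mathrm{d}t=\frac{1}{(n-1)!}\int_{-\infty}^{x}(x-t)^{n-1}f(t)\,\mathrm{d}t$ and then integrates the induction hypothesis back up via the Fundamental Theorem of Calculus. You instead substitute the induction hypothesis into $f^{(-(n+1))}(x)=\int_{-\infty}^{x}f^{(-n)}(u)\,\mathrm{d}u$ and perform a single Fubini--Tonelli interchange over $\{t\le u\le x\}$, using $\int_{t}^{x}(u-t)^{n-1}\,\mathrm{d}u=(x-t)^{n}/n$, with the alternative of truncating to $[-R,x]$, applying the classical compact-interval formula of Equation \ref{Cauchy's Formula}, and letting $R\to\infty$. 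What your route buys: it avoids differentiating under an improper integral --- the paper's interchange of $\frac{\mathrm{d}}{\mathrm{d}x}$ with $\lim_{a\to-\infty}$ is asserted without the locally uniform convergence it needs, and its final application of the Fundamental Theorem of Calculus on $(-\infty,x]$ silently requires the inner antiderivative to vanish as its argument tends to $-\infty$ --- whereas under absolute convergence Tonelli legitimizes your one interchange in a single stroke. Your explicit comparison argument showing that (absolute) convergence of the order-$n$ integral forces convergence of the lower-order integrals, so that $f^{(-n)}$ is even well defined, and your caveat that the hypothesis is best read as absolute convergence, address points the paper's proof passes over; under merely conditional convergence both your Fubini step and the paper's manipulations would need extra justification. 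The paper's calculus-style argument has the minor advantage of rehearsing the Leibniz-rule technique reused in Theorem \ref{Functional Equation with Derivatives}, but your version is the one more readily made fully rigorous.
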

    
        \begin{proof}
        
            Let $f$ be a continuous, real-valued function on $(-\infty, b]$. Then by Equation \ref{Cauchy's Formula},
            $$f^{(-1)}(x) = \frac{1}{0!} \int_{-\infty}^{x} (x - t)^{0}f(t) \,\mathrm{d}t = \int_{-\infty}^{x} f(t) \,\mathrm{d}t = F(x)$$
            by the Fundamental Theorem of Calculus if the integral converges $\forall x \in (-\infty, b]$. Assume this is true for $n$. Then if
            $$\int_{-\infty}^{x} (x - t)^{n}f(t) \,\mathrm{d}t$$
            is convergent $\forall x \in (-\infty, b]$, it follows that
            $$\frac{\mathrm{d}}{\mathrm{d}x} \left[ \frac{1}{n!} \int_{-\infty}^{x} (x - t)^{n}f(t) \,\mathrm{d}t \right]$$
            $$= \frac{1}{n!} \cdot \frac{\mathrm{d}}{\mathrm{d}x} \left[ \lim_{a \to -\infty} \int_{a}^{x} (x - t)^{n}f(t) \,\mathrm{d}t \right]$$
            $$= \frac{1}{n!} \cdot \lim_{a \to -\infty} \left[ \frac{\mathrm{d}}{\mathrm{d}x} \int_{a}^{x} (x - t)^{n}f(t) \,\mathrm{d}t \right]$$
            $$= \frac{1}{n!} \cdot \lim_{a \to -\infty} \left[ (x - x)^{n}f(x) \cdot \frac{\mathrm{d}}{\mathrm{d}x}(x) - (a - t)^{n}f(a) \cdot \frac{\mathrm{d}}{\mathrm{d}x}(a) + \int_{a}^{x} \frac{\mathrm{\partial}}{\mathrm{\partial}{x}} (x - t)^{n}f(t) \,\mathrm{d}t \right]$$
            $$= \frac{1}{n!} \int_{-\infty}^{x} \frac{\mathrm{\partial}}{\mathrm{\partial}{x}} (x - t)^{n}f(t) \,\mathrm{d}t$$
            $$= \frac{1}{(n - 1)!} \int_{-\infty}^{x} (x - t)^{n - 1}f(t) \,\mathrm{d}t$$
            by the Leibniz Integral Rule. Then, applying the induction hypothesis,
            $$f^{-(n + 1)}(x) = \int_{-\infty}^{x} \int_{-\infty}^{\sigma_{1}} \cdots \int_{-\infty}^{\sigma_{n}} f(t) \,\mathrm{d}t \ldots \mathrm{d}\sigma_{2} \,\mathrm{d}\sigma_{1}$$
            $$= \int_{-\infty}^{x} \frac{1}{(n - 1)!} \int_{-\infty}^{\sigma_{1}} (\sigma_{1} - t)^{n - 1}f(t) \,\mathrm{d}t \,\mathrm{d}\sigma_{1}$$
            if $f^{-(n)}(x)$ is convergent $\forall x \in (-\infty, b]$. By extension of the Fundamental Theorem of Calculus, it further follows that
            $$\int_{-\infty}^{x} \frac{1}{(n - 1)!} \int_{-\infty}^{\sigma_{1}} (\sigma_{1} - t)^{n - 1}f(t) \,\mathrm{d}t \,\mathrm{d}\sigma_{1}$$
            $$= \int_{-\infty}^{x} \frac{\mathrm{d}}{\mathrm{d}\sigma_{1}} \left[ \frac{1}{n!} \int_{-\infty}^{\sigma_{1}} (\sigma_{1} - t)^{n}f(t) \,\mathrm{d}t \right] \,\mathrm{d}\sigma_{1}$$
            $$= \frac{1}{n!} \int_{-\infty}^{x} (x - t)^{n}f(t) \,\mathrm{d}t.$$
            
        \end{proof}

    The improper form of Cauchy's Formula given in Lemma \ref{Cauchy's Formula with Improper Bounds} can be generalized from $n \in \mathbb{N}^{+}$ to $s \in \mathbb{C}^{+}$. This generalization is used to define the Riemann-Liouville integral with improper lower bound \cite{Zhou}.

    \begin{corollary} 
    \label{Cauchy's Formula Extended using RLI}
    
        For $\Re(s)>0$, a Riemann-Liouville fractional integral of order $s$ with improper lower bound is defined as
        $$\left(_{-\infty}\,\mathrm{I}\,_{x}^{s}f\right)(x) = \frac{1}{\Gamma(s)} \int_{-\infty}^{x} (x - t)^{s - 1}f(t) \,\mathrm{d}t$$
        given that all conditions of Lemma \ref{Cauchy's Formula with Improper Bounds} are met.
        
    \end{corollary}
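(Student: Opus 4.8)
The corollary is essentially a definition, so what has to be checked is that the formula displayed in it is the correct fractional-order counterpart of Lemma~\ref{Cauchy's Formula with Improper Bounds}: it must reduce to the repeated-integral formula when $s$ is a positive integer, and, under the convergence hypothesis carried over from that lemma, it must define an operator that actually behaves like a Riemann--Liouville integral. The plan is to obtain this by analytic continuation in the order $s$, so that the improper case is reduced to the already-established finite-interval theory behind Equation~\ref{Lower RL Integral}.

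First I would fix a continuous $f$ on $(-\infty,b]$ and a point $x\le b$, and record that for every finite $a$ the map $s\mapsto ({}_{a}\mathrm{I}_{x}^{s}f)(x)$ is holomorphic on the half-plane $\Re(s)>0$. Indeed, the kernel $(x-t)^{s-1}/\Gamma(s)$ is entire in $s$ for each fixed $t<x$, and near the endpoint $t=x$ one has $|(x-t)^{s-1}|=(x-t)^{\Re(s)-1}$, which is integrable there; holomorphy then follows from Morera's theorem after interchanging the $t$-integration with a small loop integral in $s$, an interchange justified by Fubini's theorem. Since $\Gamma(n)=(n-1)!$ for $n\in\mathbb{N}^{+}$, at $s=n$ this operator returns exactly the $n$th repeated integral of $f$ based at $a$, so ${}_{a}\mathrm{I}_{x}^{s}f$ is the unique holomorphic continuation in $s$ of Cauchy's formula~\ref{Cauchy's Formula}.

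Next I would let $a\to-\infty$. Fixing a compact set $K\subset\{\Re(s)>0\}$, write $\int_{a}^{x}=\int_{a}^{x-1}+\int_{x-1}^{x}$: the second piece is a proper integral independent of $a$, and on the first piece $|(x-t)^{s-1}|\le(x-t)^{\sigma_{K}}$ with $\sigma_{K}=\max_{s\in K}\Re(s)-1$, a single power of $x-t$. Applying the convergence hypothesis of Lemma~\ref{Cauchy's Formula with Improper Bounds} to the largest order occurring in $K$ then forces $\int_{a}^{x}(x-t)^{s-1}f(t)\,\mathrm{d}t\to\int_{-\infty}^{x}(x-t)^{s-1}f(t)\,\mathrm{d}t$ uniformly for $s\in K$. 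A locally uniform limit of holomorphic functions is holomorphic, so $s\mapsto({}_{-\infty}\mathrm{I}_{x}^{s}f)(x)$ is holomorphic on $\Re(s)>0$, equals $\lim_{a\to-\infty}({}_{a}\mathrm{I}_{x}^{s}f)(x)$, and at $s=n$ recovers the improper $n$th repeated integral of Lemma~\ref{Cauchy's Formula with Improper Bounds}. This is precisely the assertion that the displayed formula is a well-defined extension of the lemma to fractional order, and the same limiting procedure transports the standard finite-interval identities (most notably the semigroup law) to the improper setting.

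The step I expect to be the main obstacle is the uniform-in-$s$ tail bound. The lemma only asserts convergence for the integer orders it concerns, whereas here one needs it across a range of complex orders, and increasing $\Re(s)$ makes the growth of $(x-t)^{s-1}$ as $t\to-\infty$ worse; so the honest argument needs either an explicit decay rate for $f$ near $-\infty$ or a restriction to the strip of orders actually used for $\zeta(s,x)$ --- where, after the reflection $t\mapsto -t$ that turns $\int_{x}^{\infty}(t-x)^{s-1}/(e^{t}+1)\,\mathrm{d}t$ into an integral of the lemma's form, the integrand decays exponentially and the required uniformity is immediate. Everything else in the argument is routine once this uniformity is secured.
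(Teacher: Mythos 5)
Your proposal is correct in substance, but it takes a noticeably heavier route than the paper. The paper's own proof is a one-step limiting argument: it writes down the finite-lower-bound Riemann--Liouville integral of Equation~\ref{Lower RL Integral}, lets $a \to -\infty$, and declares the resulting improper integral to be $\left(_{-\infty}\,\mathrm{I}\,_{x}^{s}f\right)(x)$ whenever the conditions of Lemma~\ref{Cauchy's Formula with Improper Bounds} hold --- essentially treating the corollary as a definition, with convergence deferred (it is verified later, in Lemma~\ref{Analysis of Exponentials}, for the specific $f(t)=\frac{1}{e^{-t}+1}$). You instead justify the formula by analytic continuation in the order: holomorphy of $s\mapsto({}_{a}\mathrm{I}_{x}^{s}f)(x)$ via Morera and Fubini, locally uniform convergence as $a\to-\infty$, and agreement with the repeated integral at $s=n$. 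This buys something the paper does not state --- holomorphy of the improper fractional integral in $s$ and a clean explanation of why it is the ``right'' fractional extension --- and, importantly, you correctly flag the real gap in the corollary as stated: the convergence hypothesis of Lemma~\ref{Cauchy's Formula with Improper Bounds} concerns integer orders only, so convergence at complex orders requires an extra decay assumption on $f$ (automatic in the paper's application, where the integrand decays exponentially). Two small cautions: your claim that ${}_{a}\mathrm{I}_{x}^{s}f$ is the \emph{unique} holomorphic continuation of Cauchy's formula needs a growth condition (values at the positive integers alone do not determine a holomorphic function on a half-plane), and your dominated-convergence-style tail bound implicitly uses absolute convergence, which the lemma's hypothesis does not literally supply for sign-changing $f$; neither affects the intended application, where $f>0$ and decays exponentially on the relevant tail.
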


        \begin{proof}
        
            From Equation \ref{Lower RL Integral}, the lower-bounded Riemann-Liouville fractional integral of order $s$ of a locally integrable function $f$ is given by
            $$\left(_{a}\,\mathrm{I}\,_{x}^{s}f\right)(x) = \frac{1}{\Gamma(s)} \int_{a}^{x} (x - t)^{s - 1}f(t) \,\mathrm{d}t$$
            in the half-plane $\Re(s)>0$. Taking the limit as $a \to -\infty$ of the right-hand side of the equation above yields
            $$\lim_{a \to -\infty} \frac{1}{\Gamma(s)} \int_{a}^{x} (x - t)^{s - 1}f(t) \,\mathrm{d}t = \frac{1}{\Gamma(s)} \int_{-\infty}^{x} (x - t)^{s - 1}f(t) \,\mathrm{d}t = \, \left(_{-\infty}\,\mathrm{I}\,_{x}^{s}f\right)(x)$$
            if all conditions of Lemma \ref{Cauchy's Formula with Improper Bounds} are satisfied.
            
        \end{proof}

\section{An alternate representation of the Riemann zeta function}\label{sec3}

    In order to define the Riemann zeta function as a fractional integral, we first consider the Banach spaces of exponential type functions $X_{\sigma} = L^{1}(e^{-\sigma|t|}dt)$ \cite{Stein}.

    \begin{lemma} 
    \label{Analysis of Exponentials}
    
        Let $f(t) = \frac{1}{e^{-t} + 1}$ be an exponential function of type $\sigma = 1$ with $\lVert f \rVert = 1$. Then $\left(_{-\infty}\,\mathrm{I}\,_{x}^{s}f\right)(x)$ is convergent for $\Re(s) > 0$ and $\forall x \in (-\infty, 0]$.
        
    \end{lemma}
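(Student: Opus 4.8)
The plan is to deal with the two assertions of the statement separately, the bulk of the work being the convergence claim. That $f$ has $\lVert f\rVert = 1$ in $X_{1} = L^{1}(e^{-|t|}\,\mathrm{d}t)$ is a direct computation: splitting $\int_{-\infty}^{\infty} e^{-|t|}\,(e^{-t}+1)^{-1}\,\mathrm{d}t$ at $t = 0$ and substituting $v = e^{-t}$ on $[0,\infty)$ and $w = e^{t}$ on $(-\infty,0]$ turns the two halves into $\int_{0}^{1}(v+1)^{-1}\,\mathrm{d}v = \ln 2$ and $\int_{0}^{1} w(1+w)^{-1}\,\mathrm{d}w = 1 - \ln 2$, which sum to $1$; the description ``exponential type $\sigma = 1$'' records that the relevant exponential scale in $f$ is $e^{\pm t}$.

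For the convergence of $\left(_{-\infty}\,\mathrm{I}\,_{x}^{s}f\right)(x)$: since $\Gamma(s) \neq 0$ for $\Re(s) > 0$, it is equivalent to show that $\int_{-\infty}^{x} (x - t)^{s-1}\,(e^{-t}+1)^{-1}\,\mathrm{d}t$ converges, and it suffices to prove absolute convergence. First I would apply the linear substitution $u = x - t$, which recasts the integral as
$$\int_{0}^{\infty} \frac{u^{s-1}}{e^{\,u - x} + 1}\,\mathrm{d}u,$$
and split the range as $\int_{0}^{1} + \int_{1}^{\infty}$, using throughout that $\lvert u^{s-1}\rvert = u^{\Re(s)-1}$ for real $u > 0$ on the principal branch, so that only $\Re(s)$ enters. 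Near $u = 0$: because $x \le 0$ and $u \ge 0$ we have $u - x \ge 0$, hence $e^{\,u-x} \ge 1$ and $(e^{\,u-x}+1)^{-1} \le \tfrac12$, so the integrand is dominated by $\tfrac12\,u^{\Re(s)-1}$ and $\int_{0}^{1} u^{\Re(s)-1}\,\mathrm{d}u = 1/\Re(s) < \infty$ precisely because $\Re(s) > 0$. Near $u = \infty$: from $(e^{\,u-x}+1)^{-1} < e^{-(u-x)} = e^{x}e^{-u} \le e^{-u}$ (again using $x \le 0$) the integrand is dominated by $u^{\Re(s)-1}e^{-u}$ and $\int_{1}^{\infty} u^{\Re(s)-1}e^{-u}\,\mathrm{d}u \le \Gamma(\Re(s)) < \infty$. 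Adding the two bounds gives absolute convergence for every $s$ with $\Re(s) > 0$ and every $x \in (-\infty,0]$; undoing the substitution transfers this to the original integral, so the hypotheses of Lemma \ref{Cauchy's Formula with Improper Bounds} and Corollary \ref{Cauchy's Formula Extended using RLI} are met for this $f$ on this range of $x$.

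I do not expect a real obstacle; the lemma is essentially bookkeeping whose role is to certify that the improper-bound fractional-integral machinery of Section \ref{sec2} applies to $f(t) = (e^{-t}+1)^{-1}$. The only point needing a little care is the near-zero estimate when $\Re(s)$ is small, where $u^{s-1}$ is unbounded but still integrable, together with the observation that the restriction $x \le 0$ is exactly what forces $u - x \ge 0$ and hence makes both dominating functions clean (for $x > 0$ the integral still converges, but the bound $(e^{\,u-x}+1)^{-1} \le \tfrac12$ near $u = 0$ must be replaced by a cruder one). As a side remark, the substitution $t \mapsto -t$ in the original integral shows $\left(_{-\infty}\,\mathrm{I}\,_{x}^{s}f\right)(x) = \eta(s,-x)$ with $\eta$ as in Equation \ref{Incomplete eta}, so the lemma may be read as the statement that the incomplete $\eta$ function is well defined, viewed through the fractional-integral lens.
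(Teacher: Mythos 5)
Your proof is correct, and it takes a more self-contained route than the paper's. The paper's argument bounds $\left|{}_{-\infty}\,\mathrm{I}\,_{x}^{s}f\right|$ by $\frac{1}{|\Gamma(s)|}\int_{-\infty}^{x}(x-t)^{\Re(s)-1}f(t)\,\mathrm{d}t$, changes variables to recognize this as (a constant times) the incomplete eta function $\eta(\Re(s),-x)$, observes $\eta(\Re(s),-x)\leq\eta(\Re(s))$, and then cites the convergence of the complete $\eta$ for $\Re(s)>0$; it also wraps the statement in an operator-norm framework on $X_{1}$, and it computes $\lVert f\rVert=1$ via the symmetry $f(t)+f(-t)=1$ rather than by your two explicit substitutions (both give $1$). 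Your approach instead proves absolute convergence directly: after the substitution $u=x-t$ you split at $u=1$ and dominate by $\tfrac12 u^{\Re(s)-1}$ near $0$ (using $x\leq 0$, exactly the point the paper leaves implicit in the inequality $\eta(\Re(s),-x)\leq\eta(\Re(s))$) and by $u^{\Re(s)-1}e^{-u}$ at infinity, comparing with $\Gamma(\Re(s))$. What your version buys is that it does not lean on the convergence of the integral representation of $\eta$ for $\Re(s)>0$, which is essentially the same fact being established and which the paper simply cites; what the paper's version buys is the explicit identification of the integral with $\eta(\Re(s),-x)$, which it reuses immediately in Definition \ref{Incomplete zeta Fractional Integral} (your closing remark recovers exactly that identification). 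Your side observation that the restriction $x\leq 0$ is only needed for the clean bound, and that convergence persists for $x>0$, is accurate and goes slightly beyond what the paper records.
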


        \begin{proof}

            Let $Y = (\mathbb{C}, |\cdot|)$ such that $_{-\infty}\,\mathrm{I}\,_{x}^{s} : X_{\sigma} \longrightarrow Y$, $\Re(s) > 0$, $x \in (-\infty, 0]$ defines a linear operator for all $f \in X_{\sigma}$ which satisfy the criteria of Lemma \ref{Cauchy's Formula with Improper Bounds} and Corollary \ref{Cauchy's Formula Extended using RLI}. We endow this operator with the norm
            $$\lVert _{-\infty}\,\mathrm{I}\,_{x}^{s} \rVert = \sup\left\{\frac{\left|_{-\infty}\,\mathrm{I}\,_{x}^{s}f\right|}{\lVert f \rVert} \middle| f \in X_{\sigma}, f \neq 0\right\}.$$
            Now, let $f(t) = \frac{1}{e^{-t} + 1}$ for real $t$. Since $0 < f(t) < 1 \; \forall t \in \mathbb{R}$ and, subsequently,
            $$\limsup_{|t| \to \infty} \frac{\log\left(f(t)\right)}{t} = 1 = \sigma,$$
            it is true that $f \in X_1$ with
            $$\lVert f \rVert = \int_{-\infty}^{\infty} f(t)e^{-|t|} \,\mathrm{d}t.$$
            Since
            $$1 - f(t) = 1 - \frac{1}{e^{-t} + 1} = \frac{e^{-t}}{e^{-t} + 1} = \frac{1}{e^{t} + 1} = f(-t),$$
            we write
            $$\lVert f \rVert = \int_{0}^{\infty} (f(t)+f(-t))e^{-|t|}\,\mathrm{d}t = \int_{0}^{\infty}e^{-|t|}\,\mathrm{d}t = 1.$$
            Thus,
            $$\lVert _{-\infty}\,\mathrm{I}\,_{x}^{s}f \rVert = \left|_{-\infty}\,\mathrm{I}\,_{x}^{s}f\right| \leq \frac{1}{|\Gamma(s)|} \int_{-\infty}^{x} \left|(x - t)^{s - 1}\right|f(t)\,\mathrm{d}t$$
            $$\leq \frac{1}{|\Gamma(s)|} \int_{-\infty}^{x} (x - t)^{\Re(s) - 1}f(t)\,\mathrm{d}t$$
            $$= \frac{1}{|\Gamma(s)|} \int_{-x}^{\infty} (t + x)^{\Re(s) - 1}f(-t)\,\mathrm{d}t.$$
            From the definition of an incomplete $\eta$ function given in Equation \ref{Incomplete eta} and as a consequence of the integral representation of $\eta$ given in Equation \ref{Zeta as eta Integral}, we have
            $$\frac{1}{|\Gamma(s)|} \int_{-x}^{\infty} (t + x)^{\Re(s) - 1}f(-t)\,\mathrm{d}t$$
            $$= \frac{\Gamma(s)}{|\Gamma(s)|} \eta(\Re(s), -x) \leq \eta(\Re(s)).$$
            Since the complete $\eta$ function is well defined and convergent for $\Re(s) > 0$, the integral resulting from $\left(_{-\infty}\,\mathrm{I}\,_{x}^{s}f\right)(x)$ is also convergent $\forall x \in (-\infty, 0]$.
            
        \end{proof}
    
    Now, by Equations \ref{Incomplete eta} and \ref{Incomplete zeta} in Section \ref{sec1}, an incomplete Riemann $\zeta$ function in the positive complex half-plane is defined as
    $$\zeta(s, x) = \frac{2^{s}}{2^{s} - 2} \eta(s, x) = \frac{2^{s}}{(2^{s} - 2) \Gamma(s)} \int_{x}^{\infty} \frac{(t - x)^{s - 1}}{e^{t} + 1} \,\mathrm{d}t, \quad x \in [0, \infty)$$
    where $\zeta(s, 0) = \zeta(s)$.
    Consequently, this incomplete Riemann $\zeta$ function, convergent for $\Re(s) > 0$ except at $s = 1$ and $x \in [0, \infty)$, can be represented using a lower Riemann-Liouville fractional integral as in Corollary \ref{Cauchy's Formula Extended using RLI}.

    \begin{definition} 
    \label{Incomplete zeta Fractional Integral}
    
        Let $f(t) = \frac{1}{e^{-t} + 1}$. Then for $\Re(s) > 0$ and $x \in (-\infty, 0]$,
        $$ \zeta(s, x) = \frac{2^{s}}{2^{s} - 2} \,\left(_{-\infty}\,\mathrm{I}\,_{x}^{s}f\right)(x).$$
        
    \end{definition}

        \begin{proof}

            Let $f \in X_1 = L^{1}(e^{-|t|}\mathrm{d}t)$ be defined as $f(t) = \frac{1}{e^{-t} + 1}$. Then the fractional integral of order $s$ of $f$ with $\Re(s) > 0$ and upper bound $x \in (-\infty, 0]$ is given as
            $$\,\left(_{-\infty}\,\mathrm{I}\,_{x}^{s}f\right)(x) = \frac{1}{\Gamma(s)} \int_{-\infty}^{x} \frac{(x - t)^{s - 1}}{e^{-t} + 1} \,\mathrm{d}t = \frac{1}{\Gamma(s)} \int_{-x}^{\infty} \frac{(x + t)^{s - 1}}{e^{t} + 1} \,\mathrm{d}t = \eta(s, x)$$
            by Equation \ref{Incomplete eta}. By Lemma \ref{Analysis of Exponentials}, it is known that this integral is convergent for $\Re(s) > 0$ and $x \in (-\infty, 0]$, hence satisfying the criteria of Lemma \ref{Cauchy's Formula with Improper Bounds} and, further, Corollary \ref{Cauchy's Formula Extended using RLI}. Now, multiplying by $\frac{2^{s}}{2^{s} - 2}$, we get
            $$\frac{2^{s}}{2^{s} - 2} \eta(s, x) = \zeta(s, x)$$
            by Equation \ref{Incomplete zeta}. It then follows that $\zeta(s, x)$ is also convergent for $\Re(s) > 0$ and $x \in (-\infty, 0]$ except for the pole introduced at $s = 1$.
            
        \end{proof}

    Finally, by evaluating the incomplete $\zeta$ function defined above, we obtain the following relation between the Riemann $\zeta$ function and a lower-bounded Riemann-Liouville integral:
    \begin{equation}\label{Zeta Fractional Integral}
        \zeta(s, 0) = \frac{2^{s}}{2^{s} - 2} \,\left(_{-\infty}\,\mathrm{I}\,_{0}^{s}f\right)(0) = \zeta(s).
    \end{equation}
    In further sections, the authors discuss the properties of this new representation for $\zeta$ and the implications thereof.

\section{Implications}\label{sec4}

    Writing an incomplete Riemann $\zeta$ function using a lower-bounded Riemann-Liouville integral, as in Definition \ref{Incomplete zeta Fractional Integral}, has some potential advantages over other forms for $\Re(s) > 0$. One such advantage is found in the semigroup property associated with Riemann-Liouville integrals \cite{Riesz}.

    \begin{theorem} 
    \label{Semi-Group Property for RLI}

        For any two complex numbers $\alpha$ and $\beta$ such that $\Re(\alpha) > 0$ and $\Re(\beta) > 0$, if $\,\left(_{-\infty}\,\mathrm{I}\,_{x}^{\alpha}f\right)(x)$, $\,\left(_{-\infty}\,\mathrm{I}\,_{x}^{\beta}f\right)(x)$, and $\,\left(_{-\infty}\,\mathrm{I}\,_{x}^{\alpha + \beta}f\right)(x)$ are all convergent, then

        $$\,\left(_{-\infty}\,\mathrm{I}\,_{x}^{\alpha + \beta}f\right)(x) = \,\left(_{-\infty}\,\mathrm{I}\,_{x}^{\alpha}\right)\,\left(_{-\infty}\,\mathrm{I}\,_{x}^{\beta}f\right)(x) = \,\left(_{-\infty}\,\mathrm{I}\,_{x}^{\beta}\right)\,\left(_{-\infty}\,\mathrm{I}\,_{x}^{\alpha}f\right)(x).$$
    
    \end{theorem}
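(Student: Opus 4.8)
The plan is to follow the classical argument for the semigroup law of Riemann--Liouville integrals, adapted to the improper lower bound $-\infty$. First I would unfold Corollary \ref{Cauchy's Formula Extended using RLI} twice: setting $g = \left(_{-\infty}\,\mathrm{I}\,_{x}^{\beta}f\right)$, the composition becomes the iterated integral
$$\left(_{-\infty}\,\mathrm{I}\,_{x}^{\alpha}\right)\left(_{-\infty}\,\mathrm{I}\,_{x}^{\beta}f\right)(x) = \frac{1}{\Gamma(\alpha)\Gamma(\beta)} \int_{-\infty}^{x} (x - \sigma)^{\alpha - 1} \int_{-\infty}^{\sigma} (\sigma - t)^{\beta - 1} f(t) \,\mathrm{d}t \,\mathrm{d}\sigma.$$
Next I would interchange the order of integration so as to integrate in $\sigma$ first over the region $\{(\sigma, t) : t < \sigma < x\}$, obtaining
$$= \frac{1}{\Gamma(\alpha)\Gamma(\beta)} \int_{-\infty}^{x} f(t) \left( \int_{t}^{x} (x - \sigma)^{\alpha - 1}(\sigma - t)^{\beta - 1} \,\mathrm{d}\sigma \right) \mathrm{d}t.$$

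The substitution $\sigma = t + (x - t)u$ turns the inner integral into $(x - t)^{\alpha + \beta - 1} \int_0^1 (1 - u)^{\alpha - 1} u^{\beta - 1} \,\mathrm{d}u = (x - t)^{\alpha + \beta - 1} B(\beta, \alpha)$, and Euler's identity $B(\beta, \alpha) = \Gamma(\alpha)\Gamma(\beta)/\Gamma(\alpha + \beta)$ collapses the prefactor, leaving exactly $\left(_{-\infty}\,\mathrm{I}\,_{x}^{\alpha + \beta}f\right)(x)$ by Corollary \ref{Cauchy's Formula Extended using RLI}. Since the final expression is symmetric under swapping $\alpha$ and $\beta$, rerunning the same computation with the two orders exchanged yields the second equality, so all three quantities coincide.

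The main obstacle is justifying the interchange of integrals on the unbounded domain $(-\infty, x)^2$; on a finite interval this is an immediate application of Fubini's theorem, but here I would first establish absolute integrability. I would apply Tonelli's theorem to the nonnegative integrand $(x - \sigma)^{\Re(\alpha) - 1}(\sigma - t)^{\Re(\beta) - 1}|f(t)|$, using $|(x - t)^{s-1}| = (x - t)^{\Re(s) - 1}$ for $x - t > 0$, and carry out the same Beta-integral reduction on it; this shows the iterated integral of the absolute value equals $\tfrac{\Gamma(\Re\alpha)\Gamma(\Re\beta)}{\Gamma(\Re\alpha + \Re\beta)}$ times $\left(_{-\infty}\,\mathrm{I}\,_{x}^{\Re(\alpha) + \Re(\beta)}|f|\right)(x)$, which is finite under the standing convergence hypotheses — and in the paper's setting $f = 1/(e^{-t}+1) \geq 0$, so this is precisely the assumed convergence of $\left(_{-\infty}\,\mathrm{I}\,_{x}^{\alpha + \beta}f\right)(x)$ from Lemma \ref{Analysis of Exponentials}. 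Once absolute integrability is in hand, Fubini legitimizes the swap, and the remainder is the routine Beta-function calculation sketched above.
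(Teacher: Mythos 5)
Your proposal is correct and takes essentially the same route as the paper's proof: unfold the composition into an iterated integral, interchange the order of integration, and reduce the inner integral to the Beta function via the substitution $\sigma = t + (x - t)u$, then apply Euler's identity. The only difference is that you explicitly justify the interchange on the unbounded domain via Tonelli/Fubini, a step the paper performs without comment, so your write-up is if anything slightly more careful than the original.
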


        \begin{proof}

            Consider two improper Riemann-Liouville integrals of the continuous function $f$, which are convergent for some $x$ and whose orders are $\alpha$ and $\beta$, respectively, such that $\Re(\alpha) > 0$ and $\Re(\beta) > 0$. These are written
            $$\,\left(_{-\infty}\,\mathrm{I}\,_{x}^{\alpha}f\right)(x) \quad \text{and} \quad \left(_{-\infty}\,\mathrm{I}\,_{x}^{\beta}f\right)(x).$$
            Then
            $$\,\left(_{-\infty}\,\mathrm{I}\,_{x}^{\alpha}\right)\,\left(_{-\infty}\,\mathrm{I}\,_{x}^{\beta}f\right)(x) = \frac{1}{\Gamma(\alpha)} \int_{-\infty}^{x} (x - t)^{\alpha - 1}\,\left(_{-\infty}\,\mathrm{I}\,_{t}^{\beta}f\right)(t) \,\mathrm{d}t$$
            $$= \frac{1}{\Gamma(\alpha)\Gamma(\beta)} \int_{-\infty}^{x} \int_{-\infty}^{t} (x - t)^{\alpha - 1}(t - s)^{\beta - 1}f(s) \,\mathrm{d}s\,\mathrm{d}t$$
            $$= \frac{1}{\Gamma(\alpha)\Gamma(\beta)} \int_{-\infty}^{x} f(s) \int_{s}^{x} (x - t)^{\alpha - 1}(t - s)^{\beta - 1} \,\mathrm{d}t\,\mathrm{d}s.$$
            Changing variables from $t$ to $r$ where $t = s + (x - s)r$ yields
            $$\frac{1}{\Gamma(\alpha)\Gamma(\beta)} \int_{-\infty}^{x} (x - s)^{\alpha + \beta - 1}f(s) \int_{0}^{1} (1 - r)^{\alpha - 1}r^{\beta - 1} \,\mathrm{d}r\,\mathrm{d}s.$$
            The inner integral is the Beta function $B(\alpha, \beta) = \frac{\Gamma(\alpha)\Gamma(\beta)}{\Gamma(\alpha + \beta)}$. Substituting this gives
            $$\frac{1}{\Gamma(\alpha + \beta)} \int_{-\infty}^{x} (x - s)^{\alpha + \beta - 1}f(s) \,\mathrm{d}s = \,_{-\infty}\,\mathrm{I}\,_{x}^{\alpha + \beta}f(x).$$
            Interchanging $\alpha$ and $\beta$ in this proof yields identical results due to the commutativity of addition and multiplication. Therefore,
            $$\,\left(_{-\infty}\,\mathrm{I}\,_{x}^{\alpha}\right)\,\left(_{-\infty}\,\mathrm{I}\,_{x}^{\beta}f\right)(x) = \,\left(_{-\infty}\,\mathrm{I}\,_{x}^{\beta}\right)\,\left(_{-\infty}\,\mathrm{I}\,_{x}^{\alpha}f\right)(x) = \,\left(_{-\infty}\,\mathrm{I}\,_{x}^{\alpha + \beta}f\right)(x).$$
            
        \end{proof}

    Applying this property to the incomplete zeta function defined in Definition \ref{Incomplete zeta Fractional Integral}, which is convergent for $\Re(s) > 0$ except at $s = 1$ and $\forall x \in (-\infty, 0]$, we get
    \begin{equation}\label{Zeta Semigroup Property}
        \begin{split}
            \zeta(\alpha + \beta, x) &= \frac{2^{\alpha+\beta}}{2^{\alpha+\beta}-2}\,\left(_{-\infty}\,\mathrm{I}\,_{x}^{\alpha}\right)\left(_{-\infty}\,\mathrm{I}\,_{x}^{\beta}f\right)(x) \\
            &= \frac{2^{\alpha+\beta}}{2^{\alpha+\beta}-2}\,\left(_{-\infty}\,\mathrm{I}\,_{x}^{\beta}\right)\left(_{-\infty}\,\mathrm{I}\,_{x}^{\alpha}f\right)(x)
        \end{split}
    \end{equation}
    for $\Re(\alpha), \Re(\beta) > 0$. Further utilizing Definition \ref{Zeta Fractional Integral}, we can define $\zeta(s, x)$ recursively in terms of its partial derivatives at nearby inputs.

    \begin{theorem} 
    \label{Functional Equation with Derivatives}

        For all $x \in (-\infty, 0]$ and $\Re(s) > 0$ except at $s = 1$,
        $$\zeta(s, x) = \frac{2^{s} - 1}{2^{s} - 2} \frac{\mathrm{\partial}}{\mathrm{\partial}{x}} \zeta(s + 1, x).$$
        
    \end{theorem}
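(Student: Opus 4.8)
The plan is to differentiate the Riemann--Liouville representation of $\zeta(s+1,x)$ and recognise the derivative as a rational multiple of $\zeta(s,x)$. Set $f(t)=\frac{1}{e^{-t}+1}$ and write $g(s,x)=\left(_{-\infty}\,\mathrm{I}\,_{x}^{s}f\right)(x)$, so that Definition~\ref{Incomplete zeta Fractional Integral} reads $\zeta(s,x)=\frac{2^{s}}{2^{s}-2}\,g(s,x)$ and, applying the same definition with $s$ replaced by $s+1$ (legitimate because $\Re(s+1)>0$, and $s+1\neq 1$ since $\Re(s)>0$ forces $s\neq 0$), $\zeta(s+1,x)=\frac{2^{s+1}}{2^{s+1}-2}\,g(s+1,x)$. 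By Lemma~\ref{Analysis of Exponentials} both $g(s,x)$ and $g(s+1,x)$ — as well as $\left(_{-\infty}\,\mathrm{I}\,_{x}^{1}f\right)(x)$ — are convergent for $\Re(s)>0$ and $x\in(-\infty,0]$, so every manipulation below takes place among convergent objects.

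The crux is the identity $\frac{\partial}{\partial x}g(s+1,x)=g(s,x)$. One way to get it is to differentiate $g(s+1,x)=\frac{1}{\Gamma(s+1)}\int_{-\infty}^{x}(x-t)^{s}f(t)\,\mathrm{d}t$ under the integral sign exactly as in the proof of Lemma~\ref{Cauchy's Formula with Improper Bounds}: the boundary term $(x-x)^{s}f(x)$ vanishes for $\Re(s)>0$, the $-\infty$ endpoint contributes nothing because it is $x$-independent, and $\frac{\partial}{\partial x}(x-t)^{s}=s(x-t)^{s-1}$ combined with $\Gamma(s+1)=s\Gamma(s)$ collapses the result to $\frac{1}{\Gamma(s)}\int_{-\infty}^{x}(x-t)^{s-1}f(t)\,\mathrm{d}t=g(s,x)$. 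Equivalently, one may invoke Theorem~\ref{Semi-Group Property for RLI} with $\alpha=1$, $\beta=s$ to get $g(s+1,x)=\left(_{-\infty}\,\mathrm{I}\,_{x}^{1}\right)\left(_{-\infty}\,\mathrm{I}\,_{x}^{s}f\right)(x)=\int_{-\infty}^{x}g(s,t)\,\mathrm{d}t$, after which the Fundamental Theorem of Calculus and continuity of $t\mapsto g(s,t)$ finish the job.

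The rest is bookkeeping. From the crux,
$$\frac{\partial}{\partial x}\zeta(s+1,x)=\frac{2^{s+1}}{2^{s+1}-2}\,\frac{\partial}{\partial x}g(s+1,x)=\frac{2^{s+1}}{2^{s+1}-2}\,g(s,x)=\frac{2^{s+1}}{2^{s+1}-2}\cdot\frac{2^{s}-2}{2^{s}}\,\zeta(s,x),$$
and solving for $\zeta(s,x)$, using $2^{s+1}-2=2(2^{s}-1)$ and $2^{s+1}=2\cdot 2^{s}$, gives the constant $\frac{2^{s}}{2^{s}-2}\cdot\frac{2^{s+1}-2}{2^{s+1}}=\frac{2^{s}-1}{2^{s}-2}$, i.e. the asserted formula. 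The exclusion of $s=1$ is precisely the pole of $\frac{2^{s}}{2^{s}-2}$ (hence of $\frac{2^{s}-1}{2^{s}-2}$); the shifted argument $s+1$ never equals $1$ on $\Re(s)>0$, so $\zeta(s+1,x)$ is unproblematic there.

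I expect the only real obstacle to be the analytic justification of moving $\frac{\partial}{\partial x}$ through the improper integral — in the first route, that the Leibniz-rule step of Lemma~\ref{Cauchy's Formula with Improper Bounds} is valid here; in the second, that the Fundamental Theorem of Calculus applies to an integral over $(-\infty,x)$ as a function of its upper limit. In either case this reduces to the convergence (and local uniformity in $x$) of $\int_{-\infty}^{x}(x-t)^{\Re(s)-1}f(t)\,\mathrm{d}t$ on $(-\infty,0]$, which is exactly what the norm estimate $\lVert _{-\infty}\,\mathrm{I}\,_{x}^{s}f \rVert\le\eta(\Re(s))$ in Lemma~\ref{Analysis of Exponentials} provides; everything else is elementary.
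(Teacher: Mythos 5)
Your proposal is correct and follows essentially the same route as the paper's own proof: differentiate the Riemann--Liouville representation of $\zeta(s+1,x)$ under the integral sign via the Leibniz-rule argument of Lemma \ref{Cauchy's Formula with Improper Bounds}, use $\Gamma(s+1)=s\Gamma(s)$ to recognize $\left(_{-\infty}\,\mathrm{I}\,_{x}^{s}f\right)(x)$, and finish with the same algebra on the factors $\frac{2^{s+1}}{2^{s+1}-2}$ and $\frac{2^{s}}{2^{s}-2}$. Your alternative route via Theorem \ref{Semi-Group Property for RLI} with $\alpha=1$ and the Fundamental Theorem of Calculus is a nice extra observation, but it is not needed and is not what the paper does.
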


        \begin{proof}

            By Definition \ref{Incomplete zeta Fractional Integral}, the  incomplete Riemann $\zeta$ function is convergent for $\Re(s) > 0$, except at $s = 1$, and $\forall x \in (-\infty, 0]$. Furthermore, it can be represented as
            $$ \zeta(s, x) = \frac{2^{s}}{2^{s} - 2} \,\left(_{-\infty}\,\mathrm{I}\,_{x}^{s}f\right)(x).$$
            Then clearly,
            $$ \zeta(s + 1, x) = \frac{2^{s + 1}}{2^{s + 1} - 2} \,\left(_{-\infty}\,\mathrm{I}\,_{x}^{s + 1}f\right)(x).$$
            Differentiation yields
            $$\frac{\mathrm{d}}{\mathrm{d}x} \left[ \frac{2^{s + 1}}{2^{s + 1} - 2} \,\left(_{-\infty}\,\mathrm{I}\,_{x}^{s + 1}f\right)(x) \right]$$
            $$= \frac{2^{s + 1}}{(2^{s + 1} - 2)\Gamma(s + 1)} \frac{\mathrm{d}}{\mathrm{d}x} \int_{-\infty}^{x} (x - t)^{s} \frac{1}{e^{-t} + 1} \,\mathrm{d}t.$$
            Applying the Leibniz Integral Rule as in Lemma \ref{Cauchy's Formula with Improper Bounds}, we have
            $$\frac{2^{s + 1}}{(2^{s + 1} - 2)\Gamma(s + 1)} \int_{-\infty}^{x} \frac{\mathrm{\partial}}{\mathrm{\partial}{x}} (x - t)^{s} \frac{1}{e^{-t} + 1} \,\mathrm{d}t$$
            $$= \frac{2^{s + 1}}{(2^{s + 1} - 2)\Gamma(s)} \int_{-\infty}^{x} (x - t)^{s - 1} \frac{1}{e^{-t} + 1} \,\mathrm{d}t$$
            $$= \frac{2^{s + 1}}{2^{s + 1} - 2} \,\left(_{-\infty}\,\mathrm{I}\,_{x}^{s}f\right)(x)$$
            $$= \frac{2^{s} - 2}{2^{s} - 1} \left[ \frac{2^{s}}{2^{s} - 2} \,\left(_{-\infty}\,\mathrm{I}\,_{x}^{s}f\right)(x) \right]$$
            $$= \frac{2^{s} - 2}{2^{s} - 1} \zeta(s, x)$$
            which implies that
            $$\zeta(s, x) = \frac{2^{s} - 1}{2^{s} - 2} \frac{\mathrm{\partial}}{\mathrm{\partial}{x}} \zeta(s + 1, x).$$
            
        \end{proof}

\section{Conclusion}\label{sec5}

    In this paper, the authors defined a relationship between an incomplete Riemann $\zeta$ function and a corresponding lower-bounded Riemann-Liouville fractional integral. From Equations \ref{Incomplete eta} and \ref{Incomplete zeta} and Definition \ref{Incomplete zeta Fractional Integral}, we get
    \begin{equation}\label{Conclusion Equation 1}
        \zeta(s, x) = \frac{2^{s}}{2^{s} - 2} \eta(s, x) = \frac{2^{s}}{2^{s} - 2} \,\left(_{-\infty}\,\mathrm{I}\,_{x}^{s}f\right)(x).
    \end{equation}
    Furthermore, we can relate these incomplete versions of the Riemann $\zeta$ function and Dirichlet $\eta$ function by evaluating the equation above at $x = 0$. This follows directly from Equations \ref{Zeta Equals eta} and \ref{Zeta Fractional Integral}:
    \begin{equation}\label{Conclusion Equation 2}
        \zeta(s) = \frac{2^{s}}{2^{s} - 2} \eta(s) = \frac{2^{s}}{2^{s} - 2} \,\left(_{-\infty}\,\mathrm{I}\,_{0}^{s}f\right)(0).
    \end{equation}
    
    These equations endow an incomplete Riemann $\zeta$ function with two properties that other representations lack. First, we have the semigroup property of fractional integrals from Theorem \ref{Semi-Group Property for RLI}, which applies to $\zeta(s, x)$ $\forall x \in (-\infty, 0]$ as in Equation \ref{Zeta Semigroup Property}, where $s = \alpha + \beta$ and $\Re(\alpha), \Re(\beta) > 0$. Additionally, we have Theorem \ref{Functional Equation with Derivatives}, which defines a functional relationship for $\zeta(s, x)$ for $\Re(s) > 0$ and $\forall x \in (-\infty, 0]$ through partial differentiation with respect to $x$. Together, these properties represent a unique flexibility in an incomplete Riemann $\zeta$ function that may be useful in analyzing the properties of the complete $\zeta$ function. For instance, the authors are interested by the potential symmetries, or lack thereof, present in this incomplete $\zeta$ function as it approaches the values of the complete $\zeta$ function. It may be possible examine such symmetry for variables values of $s$ and $x$ as follows:
    \begin{equation}\label{Conclusion Equation 3}
        \begin{split}
            \zeta(s_1, x) = \zeta(s_2, x), \quad \forall x \in (-\infty, 0], \quad \Re(s_1), \Re(s_2) > 0 \\
            \Rightarrow \frac{2^{s_1}}{2^{s_1} - 2} \,\left(_{-\infty}\,\mathrm{I}\,_{x}^{s_1}f\right)(x) = \frac{2^{s_2}}{2^{s_2} - 2} \,\left(_{-\infty}\,\mathrm{I}\,_{x}^{s_2}f\right)(x).
        \end{split}
    \end{equation}
    For this reason, the authors suggest further development of Equations \ref{Conclusion Equation 1} and \ref{Conclusion Equation 2} using the properties and utilities defined in Theorems \ref{Semi-Group Property for RLI} and \ref{Functional Equation with Derivatives} in order to better understand the complete Riemann $\zeta$ function and the conjectures surrounding it.

\bibliographystyle{amsplain}

\begin{thebibliography}{13}

    \bibitem{Apostol} 
    
        T. M. Apostol, \textit{Calculus, vol. 1: one-variable calculus with an introduction to linear algebra}, 2nd ed., John Wiley and Sons, New York, 1967.

    \bibitem{Cauchy} 

        A. L. Cauchy, \textit{Résumé des leçons données à l’Ecole royale polytechnique sur le calcul infinitésimal}, ch. Trente-Cinquième Leçon, p. 13-16, Imprimerie Royale, Paris, 1823.

    \bibitem{Dunster} 

        T. M. Dunster, \textit{Uniform asymptotic approximations for incomplete Riemann Zeta functions}, J. Comput. Appl. Math. \textbf{190} (2006), no. 1-2, 339-353, DOI 10.1016/j.cam.2004.11.051.

    \bibitem{Kölbig1} 

        K. S. Kölbig. \textit{Complex zeros of an incomplete Riemann zeta function and of the incomplete gamma function}, Math. of Comp. \textbf{24} (1970), no. 111, 679-696.

    \bibitem{Kölbig2} 

        K. S. Kölbig. \textit{Complex zeros of two incomplete Riemann zeta functions}, Mathematics of Computation \textbf{26} (1972), no. 118, 551-565.

    \bibitem{Landau} 

        E. Landau, \textit{Handbuch der Lehre Von der Verteilung der Primzahlen.} Erster Band, New York, 1953.

    \bibitem{Riesz} 

        M. Riesz, \textit{L'intégrale de Riemann-Liouville et le problème de Cauchy}, Vol. 81, Acta Math. \textbf{81} (1949), 1-222, DOI 10.1007/BF02395016.

    \bibitem{Sondow} 

        J. Sondow (2003), \textit{Zeros of the alternating zeta function on the line r(s)=1.} Amer. Math. Monthly \textbf{110}, 435–437, DOI 10.1080/00029890.2003.11919982.

    \bibitem{Spiegel} 

        M. R. Spiegel, \textit{Mathematical Handbook of Formulas and Tables.} McGraw Hill, New York, 1968.

    \bibitem{Stein} 

        E. M. Stein, \textit{Functions of exponential type}, Ann. of Math. \textbf{65} (1957), no. 3, 582-592, DOI 10.2307/1970066.

    \bibitem{Titchmarsh} 

        E. C. Titchmarsh, \textit{The Theory of the Riemann Zeta-Function.} Oxford University Press, Oxford, 1986.

    \bibitem{Widder} 

        D. V. Widder, \textit{The Laplace Transform.} Princeton University Press, London, 1946.

    \bibitem{Zhou} 
    
        Y. Zhou, \textit{Fractional evolution equations and inclusions: analysis and control}, Elsevier Science, Amsterdam, North Holland, The Netherlands, 2016.
    
\end{thebibliography}

\end{document}